\setlist{nolistsep}
\newtheorem{theorem}{Theorem}[section]
\newtheorem*{theorem*}{Theorem}
\newtheorem{lemma}[theorem]{Lemma}
\newtheorem{remark}[theorem]{Remark}
\newtheorem{thm}{Theorem}
\newtheorem{cor}[thm]{Corollary}
\newtheorem{example}[theorem]{Example}
\begin{document}

\title{A strong Schottky lemma on $n$ generators for $\mathrm{CAT}(0)$ spaces}
\author{Matthew J. Conder and Jeroen Schillewaert}

%\textit{Email addresses:} \url{matthew.conder@auckland.ac.nz},\\ \url{j.schillewaert@auckland.ac.nz}.}
\date{}

\maketitle

\footnotetext{Department of Mathematics, University of Auckland, 38 Princes Street, 1010 Auckland, New Zealand. Both authors are supported by a University of Auckland FRDF grant. The first author is also supported by the Woolf Fisher Trust.}

\begin{abstract}
We give a criterion for a set of $n$ hyperbolic isometries of a $\mathrm{CAT}(0)$ metric space $X$ to generate a free group on $n$ generators.
This extends a result by Alperin, Farb and Noskov who proved this for 2 generators under the additional assumption that $X$ is complete and has no fake zero angles. Moreover, when $X$ is locally compact, the group we obtain is also discrete. We then apply these results to Euclidean buildings.
\end{abstract}

\section{Introduction}

We generalise the main theorem of \cite{AFN} as follows:

\begin{thm}\label{thma}
Let $X$ be a $\mathrm{CAT}(0)$ metric space. Let $g_1, \dots, g_n$ be hyperbolic isometries of $X$ with axes $A_1, \dots A_n$, where $n \ge 2$. Suppose that for each distinct pair of distinct axes $A_i, A_j$ either:
\begin{enumerate}[label=\normalfont{(\Roman*)}]
\item $S_{ij}=A_i \cap A_j$ is a bounded segment, and the two angles $\theta_{ij}^-,\theta_{ij}^+$ between $A_i$ and $A_j$ measured from the two endpoints of $S_{ij}$ are both equal to $\pi$ (as in the left-hand diagram of Figure \ref{axes}); or
\item $A_i$ and $A_j$ are disjoint, and there is a geodesic $B_{ij}$ between $A_i$ and $A_j$ such that all four angles between $B_{ij}$ and $A_i,A_j$ are equal to $\pi$ (as in the right-hand diagram of Figure \ref{axes}).
\end{enumerate}
Additionally, suppose that for each $1 \le i \le n$ there is an open segment $D_i \subseteq A_i$ of length equal to the translation length of $g_i$ such that \vspace{-1mm}
$$ \bigcup_{j \neq i} p_i(A_j)\subseteq D_i, \vspace{-2mm}$$ \label{eqn}
where $p_i \colon X \to A_i$ is the geodesic projection map. Then the subgroup of ${\rm Isom}(X)$ generated by $g_1, \dots, g_n$ is free of rank $n$ and, when $X$ is locally compact, it is also discrete.
\end{thm}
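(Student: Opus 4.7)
The plan is to apply the ping-pong lemma for $n$ generators: it suffices to produce pairwise-disjoint non-empty subsets $\Omega_1, \dots, \Omega_n$ of $X$ such that $g_i^k(\Omega_j) \subseteq \Omega_i$ for all distinct $i, j$ and all $k \neq 0$. Orient each axis $A_i$ so that $g_i$ translates by its translation length $\ell_i$ in the positive direction, and write $D_i = (a_i^-, a_i^+)$ with $a_i^+ - a_i^- = \ell_i$. The natural candidate sets are
\[
\Omega_i := p_i^{-1}(A_i \setminus D_i),
\]
which are non-empty (they contain $A_i \setminus D_i$) and closed in $X$. Since $A_i$ is a $g_i$-invariant closed convex subset of $X$, the projection satisfies the standard equivariance $p_i \circ g_i = g_i \circ p_i$; this will be the main computational engine.

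The central geometric assertion on which everything hinges is the following: \emph{for distinct $i, j$ and every $x \in \Omega_j$, the projection $p_i(x)$ lies in $D_i$.} Granting this, pairwise disjointness of the $\Omega_i$ is automatic, since any $x \in \Omega_i \cap \Omega_j$ would place $p_i(x)$ simultaneously in $D_i$ and $A_i \setminus D_i$. The ping-pong inclusions also follow at once: equivariance gives $p_i(g_i^k x) = g_i^k(p_i(x)) \in g_i^k(D_i)$, and because $D_i$ has length exactly $\ell_i$, the translate $g_i^k(D_i)$ is an open sub-segment of $A_i$ lying in $A_i \setminus \overline{D_i}$ for every $k \neq 0$; hence $g_i^k(x) \in \Omega_i$.

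Proving the geometric assertion will be the main obstacle. I would split on hypotheses (I) and (II) and use the basic CAT(0) fact that two geodesic segments emanating from a common point have Alexandrov angle $\pi$ exactly when their concatenation is again a geodesic. Under hypothesis (II), this already implies that for any $a' \in A_j$ and $y \in A_i$, the broken path $a' \to b_{ji} \to b_{ij} \to y$, running along $A_j$, then $B_{ij}$, then $A_i$, is itself a geodesic (its two interior angles being $\pi$ by hypothesis); in particular $p_i(a') = b_{ij}$ for every $a' \in A_j$, consistent with the hypothesis $p_i(A_j) \subseteq D_i$. For a general $x \in \Omega_j$ with $q := p_j(x)$ lying past, say, $a_j^+$, the plan is to upgrade this by exploiting the orthogonality $\angle_q(x, u) \geq \pi/2$ for every direction $u$ along $A_j$ at $q$, together with the $\pi$-angle at $b_{ji}$, to show that a minimising geodesic from $x$ to any point of $A_i$ must still factor through $b_{ji}$ and $b_{ij}$; hence $p_i(x) = b_{ij} \in D_i$. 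Case (I) is analogous, with the two endpoints of $S_{ij}$ playing the roles of $b_{ji}, b_{ij}$ depending on which side of $D_j$ contains $q$.

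Once the ping-pong lemma yields freeness, the discreteness assertion for locally compact $X$ will follow by a standard argument exploiting the ping-pong structure. Pick a base point $x_0 \notin \bigcup_i \Omega_i$; such points exist, since $D_j \subseteq p_i^{-1}(D_i)$ for every $i$ by the projection hypothesis. For any sequence of distinct non-trivial reduced words $w_n$ in the generators, the iterated ping-pong inclusions force $w_n(x_0) \in \Omega_{i_1(n)}$ with $p_{i_1(n)}$-coordinate in $g_{i_1(n)}^{k_1(n)}(D_{i_1(n)})$; as the word complexity grows, these coordinates leave every bounded region of $A_{i_1(n)}$, so $d(x_0, w_n(x_0)) \to \infty$. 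Local compactness then upgrades this to properness of the action, and hence to discreteness of the group in $\mathrm{Isom}(X)$.
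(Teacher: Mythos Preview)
Your overall architecture matches the paper's: define ping-pong sets via the projections $p_i$ onto the axes, use equivariance $p_i\circ g_i=g_i\circ p_i$ for the dynamical inclusions, and prove the sets are pairwise disjoint. (The paper splits each of your $\Omega_i$ into the two half-rays $X_i^\pm=p_i^{-1}(A_i^\pm)$, but this is cosmetic.) The substantive divergence is in how you justify the ``central geometric assertion'' and discreteness, and both of your sketches have gaps.

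\textbf{Disjointness.} Your plan is to show that for $x\in\Omega_j$ the geodesic from $x$ to any point of $A_i$ \emph{factors through} $b_{ji}$ and $b_{ij}$, using the angle $\angle_q(x,u)\ge\pi/2$ at $q=p_j(x)$. But that angle is only $\ge\pi/2$, not $\pi$, so the concatenation $[x,q]\cup[q,b_{ji}]\cup B_{ij}$ is \emph{not} known to be a geodesic, and there is no mechanism forcing $[x,y]$ (for $y\in A_i$) to pass through $q$ at all. The paper avoids this entirely with a comparison-triangle contradiction: assuming $x\in\Omega_i\cap\Omega_j$, the $\pi$-angle hypotheses force the geodesic $[p_i(x),p_j(x)]$ to be the concatenation through the bridge points $y_i,y_j$, so its initial direction at $p_i(x)$ lies along $A_i$; then the projection property gives $\angle_{p_i(x)}(x,p_j(x))\ge\pi/2$ and symmetrically at $p_j(x)$, which is impossible in the Euclidean comparison triangle. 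This argument needs no control on geodesics emanating from $x$.

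\textbf{Discreteness.} Your claim that $d(x_0,w_n(x_0))\to\infty$ is not established and is false as stated: for $w_n=g_1g_2^{\,n}$ the leading exponent $k_1$ is fixed, so $p_1(w_n(x_0))\in g_1(D_1)$ stays in a bounded segment of $A_1$. What does follow from your setup is the weaker (and sufficient) fact that every nontrivial $w$ sends $x_0$ into the closed set $\bigcup_i\Omega_i$, which does not contain $x_0$; combined with the continuity of the action on a locally compact $X$, this yields that $\{e\}$ is open in $H$. This is exactly the paper's route: it packages this observation into the ping-pong lemma itself, deducing discreteness directly from closedness of the ping-pong sets.
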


\begin{figure}[h]
\centering
\begin{tikzpicture}
  [scale=0.68,auto=left] 

\node[circle,inner sep=0pt,minimum size=3,fill=black] (1) at (-2,0) {};
\node[circle,inner sep=0pt,minimum size=3,fill=black] (1) at (2,0) {};

\draw (-2,0) to (2,0);

\draw [>-] (-4,2) to (-2,0); \node at (-4.4,2.4) {$A_i$};
 \draw [>-] (-4,-2) to (-2,0); \node at (4.4,2.4) {$A_i$};
 
\draw [->] (2,0) to (4,2);  \node at (-4.4,-2.4) {$A_j$};
\draw [->] (2,0) to (4,-2); \node at (4.4,-2.4) {$A_j$};

\node at (0,0.5) {$S_{ij}=A_i \cap A_j$};

\draw [dashed] (2.5,0.5) to [out=0,in=0, distance=0.4 cm] (2.5, -0.5);
\node at (4,0) {$\theta_{ij}^+=\pi$};
\draw [dashed] (-2.5,0.5) to [out=180,in=180, distance=0.4 cm] (-2.5, -0.5);
\node at (-4,0) {$\pi=\theta_{ij}^-$};

\end{tikzpicture} 
\begin{tikzpicture}
  [scale=0.68,auto=left] 

\node[circle,inner sep=0pt,minimum size=3,fill=black] (1) at (0,2.4) {};
\node[circle,inner sep=0pt,minimum size=3,fill=black] (1) at (0,-2.4) {};

\draw (0,2.4) to (0,-2.4); \node at (0.6,0) {$B_{ij}$};

\draw [>->] (-3,2.4) to (3,2.4);\node at (-3.5,2.4) {$A_i$}; \node at (3.5,2.4) {$A_i$};
\draw [>->] (-3,-2.4) to (3,-2.4);\node at (-3.5,-2.4) {$A_j$}; \node at (3.5,-2.4) {$A_j$};

\draw [dashed] (0.7,2.4) to [out=270,in=0, distance=0.4 cm] (0, 1.7);
\node at (0.8,1.6) {$\pi$};
\draw [dashed] (-0.7,2.4) to [out=270,in=180, distance=0.4 cm] (0, 1.7);
\node at (-0.8,1.6) {$\pi$};

\draw [dashed] (0.7,-2.4) to [out=90,in=0, distance=0.4 cm] (0, -1.7);
\node at (0.8,-1.6) {$\pi$};
\draw [dashed] (-0.7,-2.4) to [out=90,in=180, distance=0.4 cm] (0, -1.7);
\node at (-0.8,-1.6) {$\pi$};

\end{tikzpicture} 

\caption{Cases (I) and (II) of Theorem \ref{thma}.} \label{axes}
\end{figure}
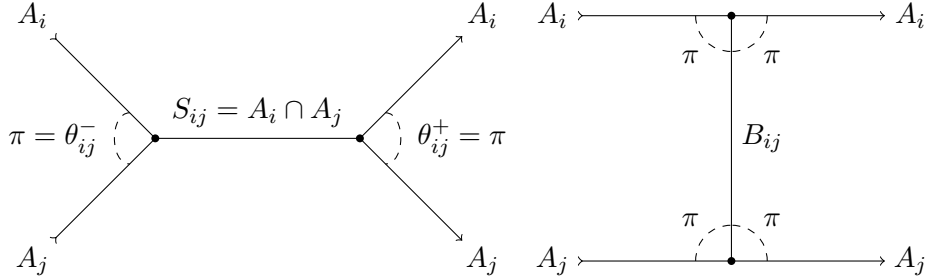

\begin{remark}
By the angle between two geodesic paths, we mean the upper (or Alexandrov) angle, as defined in \textup{\cite[I.1.12]{BH}}.
\end{remark}

\begin{remark}\label{rem:top-equiv}
We only ever consider a topology on ${\rm Isom}(X)$ when $X$ is locally compact. The topology we use is the compact-open topology, which is equivalent to the topology of pointwise convergence in this setting and this gives ${\rm Isom}(X)$ the structure of a topological group which acts continuously on $X$; see \textup{\S 2.4 Theorem 1} and \textup{\S 3.4 Corollary 1} of \textup{\cite[Chapter X]{B}}.
\end{remark}

Theorem \ref{thma} generalises the theorem stated in \cite{AFN} as we allow for an arbitrary finite number of generators and we no longer require that $X$ is complete and has no fake zero angles. Moreover, we also prove discreteness when $X$ is locally compact, and this generalises a result by Lubotzky for isometries of trees \cite[Prop 1.6]{Lub}.

In fact, the main theorem of \cite{AFN} follows directly from Theorem \ref{thma} when $n=2$: in case (I), the projection condition implies that $S_{ij}$ has length strictly less than the translation length of both $g_1$ and $g_2$ and, in case (II), the projection condition always holds since the projection onto each axis is the unique corresponding endpoint of the geodesic $B_{12}$.

\begin{remark}
Karlsson remarked without proof \textup{\cite[end of Section 6]{Ka}} that ``the condition of no-fake angles in \textup{\cite{AFN}}
can be removed and the translation lengths do not necessarily have to
be strictly greater than the length of S". This is part of what we do here, and our proof is similar to the one in \textup{\cite{AFN}}.
\end{remark}

Without the requirement for completeness, Theorem \ref{thma} may be applied to $\mathrm{CAT}(0)$ spaces which are not necessarily complete, such as certain non-discrete Euclidean buildings; see \cite{P} for some background material.

By \cite[4.6.1-4.6.2]{KL} isometries of Euclidean buildings map apartments to apartments, and if the building at infinity is thick, they also map Weyl chambers to Weyl chambers \cite[2.25 and 2.27]{P}. As in \cite{AFN}, we call an isometry $f$ \textit{generic} if none of its parallel axes is contained in any wall of any apartment of $X$. An isometry $f$ is generic if and only if it has a unique invariant apartment $\mathcal{A}_f$ \cite[2.26]{P}. A generic isometry $f$ determines,
for any fixed choice of basepoint $x\in \mathcal{A}_f$, a pair of chambers in $\mathrm{link}(x)$. We say
that generic isometries $f$ and $g$  are \textit{opposite} if $\mathcal{A}_f \cap \mathcal{A}_g = \{x\}$ and each of the
chambers determined by $f$ is opposite in $\mathrm{link}(x)$ to each of the chambers determined
by $g$.

\begin{cor}\label{corc}
Let $X$ be a Euclidean building (where $X^\infty$ is thick) and let $f_1,\cdots f_n$ be hyperbolic isometries of $X$. If $f_1,\cdots f_n$ are pairwise opposite and %their pairwise intersection points 
the pairwise intersection points of their axes
are contained in an open ball of radius at most half the minimum of the translation lengths of $f_1,\cdots f_n$, then $f_1,\cdots f_n$ generate a subgroup of ${\rm Isom}(X)$ which is free of rank $n$. If $X$ is locally compact, then this subgroup is also discrete.
\end{cor}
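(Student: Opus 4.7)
The plan is to verify the hypotheses of Theorem~\ref{thma}, with case (I) holding (with a degenerate one-point segment $S_{ij}$) for every pair of axes. For each pair $i\neq j$, the pairwise-opposite hypothesis gives $\mathcal{A}_{f_i}\cap\mathcal{A}_{f_j}=\{x_{ij}\}$ for some point $x_{ij}$; since $A_i\subseteq\mathcal{A}_{f_i}$ and $A_j\subseteq\mathcal{A}_{f_j}$, this forces $A_i\cap A_j\subseteq\{x_{ij}\}$, and together with the hypothesis that the pairwise axis intersection points exist we conclude $A_i\cap A_j=\{x_{ij}\}$. I would take $S_{ij}=\{x_{ij}\}$. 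The chambers in $\mathrm{link}(x_{ij})$ determined by $f_i$ are precisely the two germs of $A_i$ at $x_{ij}$, and likewise for $f_j$; the opposition hypothesis therefore says that the CAT(1) angle between any germ of $A_i$ and any germ of $A_j$ is $\pi$. In particular both angles $\theta_{ij}^\pm$ equal $\pi$, verifying case (I).

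Next I would check $p_i(A_j)=\{x_{ij}\}$ for each $j\neq i$. Fix $y\in A_j$; the geodesic $[x_{ij},y]$ lies in $A_j$, so its initial direction is a germ of $A_j$ at $x_{ij}$, which by the previous paragraph makes angle $\pi$ with each germ of $A_i$. The first variation formula (see \cite{BH}) then gives the derivative of $d(\cdot,y)$ along $A_i$ at $x_{ij}$, in either direction, equal to $-\cos\pi=1>0$. Hence $x_{ij}$ is a local, and (by convexity of $d(\cdot,y)|_{A_i}$) therefore global, minimum on $A_i$, giving $p_i(y)=x_{ij}$.

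Finally, let $x_0$ and $r\leq\tfrac12\min_k\ell(f_k)$ be the center and radius of the open ball from the hypothesis, and set $y_i=p_i(x_0)$. Since projection onto a convex subset of a $\mathrm{CAT}(0)$ space is $1$-Lipschitz, for each $j\neq i$,
\[
d(x_{ij},y_i)=d(p_i(x_{ij}),p_i(x_0))\leq d(x_{ij},x_0)<r\leq\tfrac12\ell(f_i),
\]
so each $x_{ij}$ lies in the open subsegment of length $2r$ of $A_i$ centered at $y_i$, which sits inside the open segment $D_i\subseteq A_i$ of length exactly $\ell(f_i)$ centered at $y_i$. Combining with the previous paragraph, $\bigcup_{j\neq i}p_i(A_j)\subseteq D_i$, so Theorem~\ref{thma} applies and delivers the stated free group of rank $n$, which is discrete when $X$ is locally compact.

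The only conceptual step I foresee is the first one: reading case (I) of Theorem~\ref{thma} when $S_{ij}$ degenerates to a single point, with the two angles $\theta^\pm_{ij}$ interpreted as the angles obtained from the two sign pairings of germs at $x_{ij}$. Once this convention is accepted, the opposition assumption immediately delivers both angles as $\pi$, and the rest is routine use of the first variation formula and the $1$-Lipschitz property of projection in $\mathrm{CAT}(0)$ spaces.
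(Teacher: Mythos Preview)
Your overall strategy---show $p_i(A_j)=\{x_{ij}\}$, then use the $1$-Lipschitz property of projection to place all the $x_{ij}$ inside a suitable $D_i$, and invoke Theorem~\ref{thma}---is exactly the paper's, and your final paragraph is essentially identical to what the paper does.

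The gap is in your first two paragraphs. You identify ``the chambers in $\mathrm{link}(x_{ij})$ determined by $f_i$'' with ``the two germs of $A_i$ at $x_{ij}$'' and conclude that opposition of chambers forces the Alexandrov angle between any germ of $A_i$ and any germ of $A_j$ to be $\pi$. In rank~$\ge 2$ this conflates a chamber (a top-dimensional spherical simplex) with a point in its interior. Two interior points of opposite chambers lie in a common spherical apartment in which the chambers are antipodal simplices $C$ and $-C$, but a point of $C$ and a point of $-C$ are at distance $\pi$ only when one is the antipode of the other; generically the distance is only $\pi-\mathrm{diam}(C)$ or more. (In a link of type $A_2$, for instance, chambers are arcs of length $\pi/3$ and interior points of opposite arcs can be as close as $2\pi/3+\varepsilon$.) So your verification of case~(I) with $\theta_{ij}^\pm=\pi$ fails as written, and your first-variation step as stated (derivative $=1$) is unjustified.

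The paper sidesteps this by quoting \cite[Prop.~1.12]{P}: any halfray of $A_i$ and any halfray of $A_j$ from $x_{ij}$ lie in a common Euclidean apartment, and from this one reads off directly that the nearest point of $A_i$ to any $y\in A_j$ is $x_{ij}$. If you want to keep your first-variation argument, note that what you actually get from opposition is $\angle_{x_{ij}}(\xi_i^\pm,\xi_j^\pm)\ge\pi/2$ (closed Weyl chambers have diameter at most $\pi/2$), and this weaker inequality already gives non-negative first variation and hence $p_i(y)=x_{ij}$ by convexity; the rest of your proof then goes through verbatim and matches the paper.
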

\begin{proof}
By \cite[Prop 1.12]{P}, two halfrays of $A_i$ and $A_j$ emanating from $x$ are contained in an apartment. Thus the projection of $A_j$ onto $A_i$ is equal to their intersection point. 
By our assumption on the intersection points, and the fact that projection does not increase distances \cite[II.2.4(4)]{BH}, the proof is completed using Theorem \ref{thma}. %and Corollary \ref{corb}.
\end{proof}

Note that the geometric realisation of a simplicial complex (in particular, of a simplicial building) is locally compact if and only if it is locally finite. When $G$ is a linear semi-simple
algebraic group defined over a non-archimedean field $k$, then the Bruhat-Tits
building associated to $G$ \cite{T} is locally compact if and only if $k$ is a local field \cite[p.464]{RTW}. 

\begin{remark}
Although all simplicial buildings have a metrically complete $\mathrm{CAT}(0)$ Davis realisation \textup{\cite[11.1]{D}}, a Euclidean building is not necessarily metrically complete, even if it is a Bruhat-Tits building \textup{\cite{MSSS}}. 
Moreover, the Cauchy completion of a Euclidean building is not necessarily a Euclidean building \textup{\cite[6.9]{Kr}}. One can instead use the theory of ultralimits to embed a Euclidean building into a metrically complete Euclidean building \textup{\cite{KL}}. However, to prove Corollary \ref{corc} we did not need this.

\end{remark}

\section{Proof of Theorem \ref{thma}}

We will use the following statement of the Ping Pong Lemma. This generalises the version in \cite[3.3]{C} to an arbitrary finite number of elements. For the discreteness part, we also remove the condition that the topological group $G$ is metrisable.

\begin{lemma}[The Ping Pong Lemma]\label{PPL}
Let $G$ be a group acting on a set $X$, and let $g_1,\dots,g_n \in G\backslash\{e\}$. Suppose that $X_1^+, X_1^-,\dots, X_n^+, X_n^-$ are non-empty, pairwise disjoint subsets of $X$, which do not cover $X$ and for all $1\le i \le n$ satisfy 
\begin{align*}
g_i(X\backslash X_i^-) \subseteq X_i^+ \;\;\; and \;\;\; g_i^{-1}(X\backslash X_i^+) \subseteq X_i^-.
\end{align*}
Then the subgroup $H=\langle g_1,\dots, g_n \rangle\le G$ is free of rank $n$. 
In the case that $X$ is a topological space and $G$ is a topological group which acts continuously on $X$, if each of the subsets $X_1^+, X_1^-,\dots, X_n^+, X_n^-$ is closed in $X$, then $H$ is also discrete.
\end{lemma}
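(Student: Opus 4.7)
The plan is to apply the standard ping pong strategy, adapted to handle both signs of each generator simultaneously by using the index set $\{1,\dots,n\}\times\{+,-\}$ parameterising the sets $X_i^\pm$. First I would fix a basepoint $x_0 \in X \setminus \bigcup_{i,\epsilon} X_i^{\epsilon}$, which exists since the $X_i^{\pm}$ do not cover $X$. Write a non-trivial element $h \in H$ as a reduced word $h = g_{i_k}^{\epsilon_k} \cdots g_{i_1}^{\epsilon_1}$ in the generators, where $\epsilon_j \in \{+,-\}$ and no two consecutive letters cancel, i.e.\ $(i_{j+1},\epsilon_{j+1}) \neq (i_j, -\epsilon_j)$ for each $j$.

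The key claim, proven by induction on $k$, is that $h \cdot x_0 \in X_{i_k}^{\epsilon_k}$. For $k=1$ we have $x_0 \notin X_{i_1}^{-\epsilon_1}$, so the hypothesis $g_{i_1}^{\epsilon_1}(X \setminus X_{i_1}^{-\epsilon_1}) \subseteq X_{i_1}^{\epsilon_1}$ gives the base case. For the inductive step, write $h = g_{i_k}^{\epsilon_k} h'$ with $h' = g_{i_{k-1}}^{\epsilon_{k-1}} \cdots g_{i_1}^{\epsilon_1}$. By induction $h' \cdot x_0 \in X_{i_{k-1}}^{\epsilon_{k-1}}$; since the word is reduced, $(i_k, \epsilon_k) \neq (i_{k-1}, -\epsilon_{k-1})$, and since the $X_i^{\pm}$ are pairwise disjoint, $h' \cdot x_0 \notin X_{i_k}^{-\epsilon_k}$. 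Applying the hypothesis on $g_{i_k}^{\epsilon_k}$ then yields $h \cdot x_0 \in X_{i_k}^{\epsilon_k}$, which is disjoint from the set containing $x_0$. Hence $h \neq e$, and $H$ is free of rank $n$.

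For the discreteness part, assume each $X_i^{\pm}$ is closed. Then $U_0 := X \setminus \bigcup_{i,\epsilon} X_i^{\epsilon}$ is an open neighborhood of $x_0$ in $X$. Since the action $G \times X \to X$ is continuous and $e \cdot x_0 = x_0 \in U_0$, there exists an open neighborhood $V$ of $e$ in $G$ with $V \cdot x_0 \subseteq U_0$. If some non-trivial $h \in V \cap H$ existed, the argument above would put $h \cdot x_0$ into one of the $X_i^{\epsilon}$, contradicting $h \cdot x_0 \in U_0$. Hence $V \cap H = \{e\}$, so $H$ is discrete.

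No part of this argument is a real obstacle: the combinatorics of the ping pong induction is entirely routine, and the only point worth care is bookkeeping the \textbf{last} letter of the reduced word so that the disjointness of the $X_i^{\pm}$ can be invoked at the inductive step. For the discreteness statement, the only novelty compared with a metrisable formulation is that we bypass sequences and use continuity of the evaluation map $g \mapsto g \cdot x_0$ at $e$ directly, which is why closedness of the $X_i^{\pm}$ (rather than any countability assumption on $G$) is what we need.
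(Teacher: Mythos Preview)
Your proof is correct and follows essentially the same route as the paper: the freeness argument is the standard ping pong induction (which the paper compresses to the single sentence ``if $w$ is a non-trivial word then $w(x)\in Y$''), and your discreteness argument via the orbit map $g\mapsto g\cdot x_0$ at the identity is a minor repackaging of the paper's argument, which instead takes the preimage of $X\setminus Y$ under the full action map $H\times X\to X$ and intersects with $H\times(X\setminus Y)$ to see that $\{e\}\times(X\setminus Y)$ is open.
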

\begin{proof}
Set $Y=X_1^+ \cup X_1^- \cup \dots \cup X_n^+ \cup X_n^-$ and choose $x \in X \backslash Y$. If $w$ is a non-trivial word in $g_1, \dots, g_n$, then $w(x) \in Y$, therefore $w\neq e$ in $G$. Hence $H$ is free of rank $n$.

For the second part, note that $H$ acts continuously on $X$, that is, the map $H\times X\to X$ is continuous with respect to the product topology. It follows that the inverse image of the open set $X\setminus Y$ is open in $H \times X$. But the intersection of this inverse image with the open set $H\times X\setminus Y$ is $\{e\}\times X\setminus Y$, thus $\{e\}$ is open in $H$ and hence $H$ is discrete.
\end{proof}

\begin{lemma}\label{geodesic-concatenation}
Let $[x,y]$ and $[y,z]$ be geodesics in a $\mathrm{CAT}(0)$ space. If $\angle_y (x,z)=\pi$, then the concatenation $[x,z]=[x,y]\cup [y,z]$ is a geodesic.
\end{lemma}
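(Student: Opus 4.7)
The plan is to reduce the statement to the length identity $d(x,z) = d(x,y) + d(y,z)$. Once this holds, the concatenation $[x,y] \cup [y,z]$ is a rectifiable path from $x$ to $z$ whose total length equals $d(x,z)$, so after reparametrisation by arc length it is a minimising geodesic; by the uniqueness of geodesics in a $\mathrm{CAT}(0)$ space, this reparametrisation must coincide with the (unique) geodesic $[x,z]$.

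Since the triangle inequality already gives $d(x,z) \le d(x,y) + d(y,z)$, the remaining task is to establish the reverse inequality. To this end I would form a comparison triangle $\bar\Delta(\bar x, \bar y, \bar z) \subseteq \mathbb{R}^2$ with side lengths equal to those of $\Delta(x,y,z) \subseteq X$ and apply the Euclidean law of cosines at $\bar y$:
\[
d(x,z)^2 = d(x,y)^2 + d(y,z)^2 - 2\, d(x,y)\, d(y,z)\, \cos\bar\angle_{\bar y}(\bar x, \bar z),
\]
where $\bar\angle_{\bar y}(\bar x,\bar z)$ denotes the comparison angle. A standard consequence of the $\mathrm{CAT}(0)$ inequality (see \cite{BH}) is the bound $\angle_y(x,z) \le \bar\angle_{\bar y}(\bar x, \bar z)$ on the Alexandrov angle. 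Combined with the hypothesis $\angle_y(x,z) = \pi$, this forces $\bar\angle_{\bar y}(\bar x, \bar z) = \pi$ and hence $\cos\bar\angle_{\bar y}(\bar x, \bar z) = -1$. Substituting into the displayed equation yields $d(x,z)^2 = (d(x,y) + d(y,z))^2$, so $d(x,z) = d(x,y) + d(y,z)$, as required.

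I do not anticipate any serious obstacle: the lemma essentially records the fact that in a $\mathrm{CAT}(0)$ space an Alexandrov angle of $\pi$ is the geometric signal for collinearity. The one subtle point is that the comparison-angle majoration is being applied to the full triangle $\Delta(x,y,z)$ rather than to the arbitrarily small subsegments appearing in the very definition of the Alexandrov angle, but this strengthening is a standard reformulation of the $\mathrm{CAT}(0)$ inequality and causes no difficulty.
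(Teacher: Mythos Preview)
Your argument is correct and follows the same route as the paper: invoke the comparison-angle inequality (this is \cite[II.1.7(4)]{BH}) to force the comparison angle at $\bar y$ to equal $\pi$, deduce $d(x,z)=d(x,y)+d(y,z)$, and conclude. The paper simply cites \cite[II.1.7(4)]{BH} and states the distance identity directly, whereas you spell out the law of cosines and the reparametrisation step, but the substance is identical.
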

\begin{proof}
By \textup{\cite[II.1.7(4)]{BH}} the corresponding angle in the relevant comparison triangle is also $\pi$ and thus $d(x,z) = d(x,y)+d(y,z)$.
\end{proof}

\begin{proof}[Proof of Theorem \ref{thma}]
Since geodesics are complete convex subsets in $\mathrm{CAT}(0)$ spaces, the projection maps $p_i$ we use are well-defined \cite[II.2.4]{BH}.

Note that for each $1 \le i \le n$, the open segment $D_i$ is a fundamental domain for the action of $g_i$ on $A_i$. Let $A_i^+$ denote the union of all translates of $\overline{D_i}$ under positive powers of $g_i$. Similarly, let $A_i^-$ denote the union of all translates of $\overline{D_i}$ under negative powers of $g_i$. Then $A_i^+$ and $A_i^-$ are disjoint geodesic rays with $A_i \backslash D_i=A_i^+ \sqcup A_i^-$. Set $X_i^+=p_i^{-1}(A_i^+)$ and $X_i^-=p_i^{-1}(A_i^-)$ for each $i$. We will show that these subsets satisfy the hypotheses of the first part of Lemma \ref{PPL}.

It is straightforward to check that the subsets $X_1^\pm, \dots, X_n^\pm$ are non-empty, closed and that they do not cover $X$. Each $X_i^+$ is also disjoint from $X_i^-$, so to apply Lemma \ref{PPL} we must show that the sets $X_i^\pm$ are disjoint from $X_j^\pm$ for $i \neq j$. Since we can replace $g_i$ and $g_j$ by their inverses, if necessary, it suffices to show that $X_i^+$ and $X_j^+$ are disjoint.

To this end, suppose that $x \in X_i^+ \cap X_j^+$ for some $i \neq j$. Then $p_i(x) \in A_i^+$ and $p_j(x) \in A_j^+$. Note that $p_i(x) \neq p_j(x)$, as otherwise $p_i(x)\in D_i \subseteq A_i \backslash A_i^+$. A similar argument shows that $x \notin A_i \cup A_j$. 

In case (I), let $y_i$ and $y_j$ be the (not necessarily distinct) endpoints of $S_{ij}$ which are closest to $p_i(x)$ and $p_j(x)$ respectively. In case (II), let $A_i\cap B_{ij} = \{y_i\}$ and $A_j\cap B_{ij} = \{y_j\}$. By Lemma \ref{geodesic-concatenation}, the geodesic $[p_i(x),p_j(x)]$ is the concatenation of geodesics $[p_i(x),y_i]\cup [y_i,y_j] \cup [y_j,p_j(x)]$. In particular, $\angle_{p_i(x)} (x,p_j(x))  = \angle_{p_i(x)} (x,y_i) \geq \frac{\pi}{2}$ and  $\angle_{p_j(x)} (x,p_i(x))  = \angle_{p_j(x)} (x,y_j) \geq \frac{\pi}{2}$ by \cite[II.2.4(3)]{BH}. But the triangle with distinct vertices $x,p_i(x),p_j(x)$ has a Euclidean comparison triangle with corresponding angles which are also at least $\frac{\pi}{2}$ by  \cite[II.1.7(4)]{BH}, and this is a contradiction.

It remains to prove that $g_i(X \backslash X_i^-)\subseteq X_i^+$ and $g_i^{-1}(X \backslash X_i^+) \subseteq X_i^-$ for each $1\le i\le n$. As in \cite{AFN}, we first note that $p_i$ commutes with $g_i$. Indeed, for $x \in X$, $p_i(g_i(x))$ is the unique point on $A_i$ which realises the distance $d(g_i(x),A_i)$. It follows that $p_i(g_i(x))=g_i(p_i(x))$, since
$$d(g_i(x),A_i) = d(g_i(x),g_i(A_i)) = d(x,A_i) = d(x,p_i(x)) = d(g_i(x),g_i(p_i(x))).$$
Hence if $x \in X\backslash X_i^-$, then $p_i(g_i(x))=g_i(p_i(x))\in A_i^+$ i.e. $g_i(x) \in X_i^+$. Similarly, $p_i$ commutes with $g_i^{-1}$ and, if $x \in X\backslash X_i^+$, then $p_i(g_i^{-1}(x))=g_i^{-1}(p_i(x))\in A_i^-$ i.e. $g_i^{-1}(x) \in X_i^-$. Thus $g_1, \dots, g_n$ generate a free group of rank $n$ by the first part of Lemma \ref{PPL}.

Finally, we prove discreteness when $X$ is locally compact. The action of ${\rm Isom}(X)$ on $X$ is continuous by Remark \ref{rem:top-equiv} and each of the subsets $X_i^\pm$ is closed in $X$ by \cite[II.2.4(4)]{BH}. Hence the second part of Lemma \ref{PPL} completes the proof of Theorem \ref{thma}.
\end{proof}

\section{Two examples}

There are isometries of locally compact $\mathrm{CAT}(0)$ metric spaces which generate groups which are free but not discrete.

\begin{example}\label{free-not-disc}
\normalfont The matrices $A=\begin{bmatrix}
1 & 2 \\
0 & 1 \end{bmatrix}$ and 
$B=\begin{bmatrix}
1 & 0 \\
2 & 1 \end{bmatrix}$
generate a free group of rank two (the \textit{Sanov subgroup} \cite{Sanov}). However, viewing them as matrices over the $p$-adic numbers $\mathbb{Q}_p$, both $A$ and $B$ are infinite order elliptic isometries of the Bruhat-Tits tree $T_p$ corresponding to $\mathbb{Q}_p$. Hence this subgroup of ${\rm PSL}_2(\mathbb{Q}_p) \le {\rm Isom}(T_p)$ is not discrete.
\end{example}

We conclude with an example satisfying the conditions of Corollary \ref{corc}. 
\begin{example}\label{buildings-ex}
\normalfont 
Let $x$ be a vertex in a locally finite Euclidean building $X$ of type $\tilde{A}_2$.  The link of $x$ is a (not necessarily classical) projective plane $\pi$ of order $n$. 
Chambers in $\pi$ correspond to incident point-line pairs. Two chambers $(p_1,L_1)$ and $(p_2,L_2)$ are opposite in $\pi$ if and only if $p_1\notin L_2$ and $p_2\notin L_1$.

Consider the set of $n+1$ lines $L_1,\dots , L_{n+1}$, which each pass through a fixed point $p$ of $\pi$ and points $p \neq p_i \in L_i$. The corresponding chambers $C_i = (p_i,L_i)$
are pairwise opposite in the link of $x$. For any $1\le k\le \lfloor \frac{n+1}{2} \rfloor$, we can choose $k$ pairs of chambers $P_1,\dots,P_k$. For each pair $P_i = (C,C')$, select interior points $q\in C$ and $q'\in 
C'$ which are opposite in the geometric realisation. By \textup{\cite[1.12]{P}}, the Weyl chambers corresponding to the members of $P_i$ are contained in an unique apartment and the halfrays corresponding to $q$ and $q'$ form a geodesic $A_i$. If these $A_i$ are the axes of hyperbolic isometries $f_i$ then, by construction, these isometries are generic and pairwise opposite and hence, by Corollary \ref{corc}, they generate a subgroup of ${\rm Isom}(X)$ which is both discrete and free of rank $k$.

For example, consider the Bruhat-Tits building $\Delta$ associated to $G=\mathrm{SL}_3(\mathbb{Q}_p)$. If $f$ is a generic isometry with unique invariant apartment $\mathcal{A}_f$ then $sfs^{-1}$ has $s\mathcal{A}_f$ as its unique invariant apartment. Choose Weyl chambers and apartments as above. Since $G$ acts strongly transitively on $\Delta$ \cite{T} it suffices to find one generic isometry $f$ and then the appropriate conjugates of $f$ will generate a free group, which moreover is discrete since $\Delta$ is locally finite. We can take $f$ to be, for instance, the diagonal matrix with entries $1$, $p$ and $p^{-1}$. A more explicit description of the appropriate conjugates of $f$ could then be obtained using similar techniques to \cite{AFN,WZ}.

\end{example}

\flushleft{{\bf Acknowledgement}} We are very grateful for the referee's remarks which improved the paper both stylistically and mathematically.


\begin{thebibliography}{99}
\small
\addtolength{\itemsep}{-0.45\baselineskip}
\bibitem{AFN} R. C. Alperin, B. Farb\ and\ G. A. Noskov, A strong Schottky lemma for nonpositively curved singular spaces, {\it Geom. Dedicata} {\bf 92} (2002), 235--243.

\bibitem{B} N.~Bourbaki, {\it Elements of Mathematics. General Topology. Part 2}, Hermann, Paris, 1966.


\bibitem{BH} M.~Bridson and A.~Haefliger, {\it Metric spaces of non-positive curvature}, Grundlehren der mathematischen Wissenschaften, 319, Springer-Verlag, Berlin, 1999.

\bibitem{C} M.~J.~Conder, Discrete and free two-generated subgroups of $\rm SL_2$ over non-archimedean local fields, {\it J. Algebra} {\bf 553} (2020), 248--267.


\bibitem{D} M.~W.~Davis, Buildings are ${\rm CAT}(0)$, in {\it Geometry and cohomology in group theory (Durham, 1994)}, 108--123, London Math. Soc. Lecture Note Ser., 252, Cambridge Univ. Press, Cambridge.

\bibitem{Ka} A.~Karlsson, On the dynamics of isometries, {\it Geom. Topol.} {\bf 9} (2005), 2359--2394. 

\bibitem{KL} B.~Kleiner and B. Leeb, Rigidity of quasi-isometries for symmetric spaces and Euclidean buildings, {\it Inst. Hautes \'{E}tudes Sci. Publ. Math.} No. 86 (1997), 115--197.

\bibitem{Kr} L.~Kramer, On the local structure and the homology of ${\rm CAT}(\kappa)$ spaces and Euclidean buildings, {\it Adv. Geom.} {\bf 11} (2011), no.~2, 347--369.

\bibitem{Lub} A. Lubotzky, Lattices in rank one Lie groups over local fields, {\it Geom. Funct. Anal.} {\bf 1} (1991), no.~4, 406--431.

\bibitem{MSSS} B.~Martin, J.~Schillewaert, K.~Struyve and G.~Steinke, On metrically complete Bruhat-Tits buildings, {\it Adv. Geom.} {\bf 13} (2013), no.~3, 497--510.

\bibitem{P} A.~Parreau, Immeubles affines: construction par les normes et \'{e}tude des isom\'{e}tries, in {\it Crystallographic groups and their generalizations (Kortrijk, 1999)}, 263--302, Contemp. Math., 262, Amer. Math. Soc., Providence, RI.

\bibitem{RTW} B.~R\'emy, A.~Thuillier, and A.~Werner, Bruhat-Tits theory from Berkovich's point of view. I. Realizations and compactifications of buildings, {\it Ann. Sci. \'{E}c. Norm. Sup\'{e}r. (4)} {\bf 43} (2010), no.~3, 461--554. 

\bibitem{Sanov} I.~N.~Sanov, A property of a representation of a free group (Russian), {\it Doklady Akad. Nauk SSSR (N. S.)} {\bf 57} (1947), 657--659.

\bibitem{T} J.~Tits, Reductive groups over local fields, in {\it Automorphic forms, representations and $L$-functions (Proc. Sympos. Pure Math., Oregon State Univ., Corvallis, Ore., 1977), Part 1}, 29--69, Proc. Sympos. Pure Math., XXXIII, Amer. Math. Soc., Providence, RI.

\bibitem{WZ} S.~Witzel and M.~Zaremsky, A free subgroup in the image of the 4-strand Burau representation, {\it J. Knot Theory Ramifications} {\bf 24} (2015), no.~12, 1550065, 16 pp.

\end{thebibliography}
\end{document}